\renewcommand{\uppercasenonmath}[1]{}
\numberwithin{equation}{section} \theoremstyle{plain}
\newtheorem*{thm*}{Main Theorem}
\newtheorem{theorem}{Theorem}[section]
\newtheorem{corollary}[theorem]{Corollary}
\newtheorem*{corollary*}{Corollary}
\newtheorem{lemma}[theorem]{Lemma}
\newtheorem*{lemma*}{Lemma}
\newtheorem{proposition}[theorem]{Proposition}
\newtheorem*{proposition*}{Proposition}
\newtheorem*{remark*}{Remark}
\newtheorem{definition}[theorem]{Definition}
\newtheorem*{definition*}{Definition}
\newtheorem*{example*}{Example}
\newtheorem*{acknowledgements*}{ACKNOWLEDGEMENTS}
\newcommand{\Ext}{\mbox{\rm Ext}}
\newcommand{\Hom}{\mbox{\rm Hom}}
\newcommand{\Tor}{\mbox{\rm Tor}}
\newcommand{\im}{\mbox{\rm Im}}
\newcommand{\Coker}{\mbox{\rm Coker}}
\newcommand{\Ker}{\mbox{\rm Ker}}
\newcommand{\gldim}{\mbox{\rm gl.dim}}
\newcommand{\id}{\mbox{\rm id}}
\newcommand{\pd}{\mbox{\rm pd}}
\newcommand{\fd}{\mbox{\rm fd}}
\newcommand{\add}{\mbox{\rm add}}
\begin{document}
\begin{center}
{\large  \bf
Gorenstein (semi)hereditary rings with respect to a semidualizing module}
\\ \vspace{0.6cm} {GUOQIANG ZHAO\footnote {Corresponding author} and JUXIANG SUN}
\end{center}

\bigskip
\begin{center}{ \bf  Abstract}
\end{center}

Let $C$ be a semidualizing module. We first investigate the properties of
finitely generated $G_C$-projective modules.
Then, relative to $C$, we introduce and study the rings of
Gorenstein (weak) global dimensions at most 1,
which we call $C$-Gorenstein (semi)hereditary rings,
and prove that every $C$-Gorenstein hereditary ring is both coherent and $C$-Gorenstein semihereditary.
\vspace{0.2cm}

\noindent 2010 {\it Mathematics Subject Classification}: 13D02, 13D05, 13D07, 18G25.

\noindent {\it Keywords and phrases}: semidualizing modules, $G_C$-projective (injective)
modules, $G_C$-flat modules, $G_C$-(semi)hereditary rings, coherent tings.
\bigskip


\section { \bf Introduction}
As a nice generation of Gorenstein homological dimensions [4], 
White [9] introduced the $G_C$-projective, injective and flat dimensions,
where $C$ is a semidualizing module.
The author showed that they share many common properties with
the Gorenstein homological dimensions,
which have been extensively studied in recent decades.
However, the proofs may become more complicated.

It is well-known that, the classical global dimensions of rings
play an important role in the theory of rings.
Motivated by Bennis and Mahadou\rq s [1] ideas to study the global dimensions of
a ring $R$ in terms of Gorenstein homological dimensions,
recently, Zhao and Sun [10] studied the global dimensions
of $R$ with respect to a semidualizing module $C$,
and proved that sup$\{G_{C}$-$\pd_R(M)| M$ is an $R$-module$\}$
 = sup$\{G_{C}$-$\id_R(M)| M$ is an $R$-module$\}$.
The common value, denoted by $\mathrm{G}_C$-$\gldim(R)$,
is called the $C$-Gorenstein global dimension of $R$.
Similarly, the $C$-Gorenstein weak
global dimension of $R$ is also defined as
$\mathrm{G}_C$-$w\gldim(R)$ = sup$\{G_C$-$\fd_R(M) |M$ is an $R$-module$\}$.


On the other hand, in classical homological algebra, 
the rings of (weak) global dimesions at most 1, called (semi)hereditary [7], 
are paid more attention, and the following are well-known:
(1) every hereditary ring is coherent and semihereditary;
(2) a ring $R$ is semihereditary if and only if every finitely
generated submodule of a projective module is projective.
Therefore, it is interesting to consider the following questions.

Question A. Is it true that every $C$-Gorenstein hereditary ring is coherent
and $C$-Gorenstein semihereditary?

Question B. Is it true that $R$ is $C$-Gorenstein semihereditary if and only if every finitely
generated submodule of a $G_C$-projective module is $G_C$-projective?

Based on the results mentioned above, in this paper we mainly study the
properties of $C$-Gorenstein (semi)hereditary rings and
discuss the two questions A and B.

\vspace{3mm}


\section { \bf Preliminaries}

Throughout this work $R$ is a commutative ring with unity. For an $R$-module $T$,
let $\add_R T$ be the subclass of $R$-modules consisting of all
modules isomorphic to direct summands of finite direct sums of copies of $T$.

We define gen$^{*}(T)$ = $\{M$ is an $R$-module $|$ there exists an
exact sequence $\cdots\rightarrow T_i\rightarrow\cdots\rightarrow
T_1\rightarrow T_0\rightarrow M\rightarrow 0$ with $T_i\in\add_RT$
and $\Hom_R(T, -)$ leaves it exact $\}$ (see [8]).
cogen$^{*}(T)$ is defined dually.

Semidualizing modules, defined next, form the basis for our categories of interest.

\begin{definition} {\rm ([9])}
An $R$-module $C$ is {\it semidualizing} if

$(1)$ $C$ admits a degreewise finite projective resolution,

$(2)$ The natural homothety map $R\rightarrow\Hom_R(C,C)$ is an isomorphism, and

$(3)$ $\Ext^i_R (C,C) = 0$ for any $i\geq 1$.
\end{definition}

In the following, we always assume that $C$ is a semidualizing $R$-module.

\begin{definition} {\rm ([6])}
An $R$-module is called {\it
$C$-projective} if it has the form $C\otimes_R P$ for some
projective $R$-module $P$. An $R$-module is called {\it
$C$-injective} if it has the form $\Hom_R(C, I)$ for some injective
$R$-module $I$.
\end{definition}


\begin{definition} {\rm ([9])}
An $R$-module $M$ is called {\it $G_C$-projective} if 
there exists an exact sequence of $R$-modules
$$
\mathbb{X}= \cdots\rightarrow P_1\rightarrow
P_0\rightarrow C\otimes_RP_{-1}\rightarrow C\otimes_RP_{-2}\rightarrow \cdots
$$
with all $P_i$ projective, such that $M\cong \im(P_0\rightarrow C\otimes_RP_{-1})$
and $\Hom_R(\mathbb{X}, C\otimes_R P)$ is exact for any projective $R$-module $P$.
\end{definition}

$G_C$-injective modules are defined in a dual manner, 
and denoted by $\mathcal{GP}_C(R)$ (respectively $\mathcal{GI}_C(R)$)
the class of $G_C$-projective (respectively injective) $R$-modules.


\begin{definition} {\rm ([9])}
Let $\mathcal{X}$ be a class of $R$-modules and
$M$ an $R$-module. An {\it $\mathcal{X}$-resolution} of $M$ is an
exact sequence of $R$-modules as follows:
$$\cdots\rightarrow
X_n\rightarrow\cdots\rightarrow X_1\rightarrow X_0\rightarrow
M\rightarrow 0$$ with each $X_i$ $\in$ $\mathcal{X}$ for any $i\geq
0$. The $\mathcal{X}$-${\it projective\ dimension}$ of $M$ is the
quantity
\begin{center}
$\mathcal{X}$-$\pd_{R}(M) = \inf\{\sup \{n\geq0 | X_n\neq0\} | 
X$ is an $\mathcal{X}$-resolution of $M\}$.
\end{center}
\end{definition}

The {\it $\mathcal{X}$-coresolution} and $\mathcal{X}$-${\it
injective\ dimension}$ of $M$ are defined dually. For simplicity,
we write $G_{C}$-$\pd_R(M)$ = $\mathcal{GP}_C(R)$-$\pd_R(M)$
and $G_{C}$-$\id_R(M)$ = $\mathcal{GI}_C(R)$-$\id_R(M)$.

\begin{definition}{\rm ([5])}
$M$ is called {\it $G_C$-flat} if
there is an exact sequence of $R$-modules
$$ \mathbb{Y}= \cdots\rightarrow F_1\rightarrow
F_0\rightarrow C\otimes_RF_{-1}\rightarrow C\otimes_RF_{-2}\rightarrow \cdots$$
with all $F_i$ flat, such that $M\cong \im(F_0\rightarrow C\otimes_RF_{-1})$
and $\Hom_R(C, I)\otimes_R\mathbb{Y}$ is still exact for any injective $R$-module $I$.
We define $G_{C}$-$\fd_R(M)$ analogously to $G_{C}$-$\pd_R(M)$.
\end{definition}

\begin{definition}
A ring $R$ is called $C$-Gorenstein hereditary ($G_C$-hereditary for
short) if every submodule of a projective $R$-module is $G_C$-projective
(i.e. $G_C$-$\gldim(R)\leq 1$), and $R$ is said to be $C$-Gorenstein semihereditary
($G_C$-semihereditary for short) if
$R$ is coherent and every submodule of a flat $R$-module is $G_C$-flat.
\end{definition}


\section { \bf $C$-Gorenstein (semi)hereditary rings}

We use $(-)^C$ to denote the functor $\Hom_R(-, C)$.

\begin{lemma}
Assume that $M$ is a finitely generated $G_C$-projective $R$-module. Then

(1) $M\in$ cogen$^{*}(C)$,

(2) $M^C\in$ gen$^{*}(R)$.
\end{lemma}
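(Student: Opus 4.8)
The plan is to prove (1) first, by a truncation argument, and then to deduce (2) by applying the functor $(-)^{C}$ to the $\add_R C$-coresolution produced in the course of proving (1). Throughout I use the following standard facts about $G_C$-projective modules, all contained in [9]: the class $\mathcal{GP}_C(R)$ is closed under direct summands and under finite direct sums; if $N$ is $G_C$-projective then $\Ext^{i}_R(N,C)=0$ for all $i\geq1$ (this is immediate from the definition, since the left-hand, projective part $\cdots\to P_1\to P_0\to N\to0$ of a defining complex for $N$ stays exact under $\Hom_R(-,C)$); and every $G_C$-projective module $N$ fits into an exact sequence $0\to N\to C\otimes_R Q\to N'\to0$ with $Q$ projective and $N'$ again $G_C$-projective (take the first cosyzygy of a defining complex for $N$, with $Q=P_{-1}$). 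Replacing $Q$ by a free module $F$ that contains it as a direct summand, say $F=Q\oplus Q'$, only replaces $N'$ by $N'\oplus(C\otimes_R Q')$, which is again $G_C$-projective, so we may always take $Q$ free.

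For (1): apply the last remark to $M$, obtaining an exact sequence $0\to M\to C\otimes_R F\to M_1\to0$ with $F$ free and $M_1$ $G_C$-projective. Since $C\otimes_R F$ is a direct sum of copies of $C$ and $M$ is finitely generated, the image of $M$ lies in a finite subsum $C^{n_0}\in\add_R C$, which is a direct summand of $C\otimes_R F$; writing $C\otimes_R F=C^{n_0}\oplus C'$ we get $M_1\cong(C^{n_0}/M)\oplus C'$, so $L_1:=C^{n_0}/M$ is a finitely generated direct summand of the $G_C$-projective module $M_1$, hence itself a finitely generated $G_C$-projective module. Iterating the construction with $L_1$, then $L_2$, and so on, produces an exact sequence
$$0\to M\to C^{n_0}\to C^{n_1}\to C^{n_2}\to\cdots$$
with each $C^{n_i}\in\add_R C$ and with each intermediate image (put $L_0=M$) a finitely generated $G_C$-projective module $L_i$, fitting into a short exact sequence $0\to L_i\to C^{n_i}\to L_{i+1}\to0$. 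Since $\Ext^{1}_R(L_{i+1},C)=0$ for every $i$, the functor $\Hom_R(-,C)$ leaves each of these short exact sequences exact, hence leaves the whole displayed coresolution exact. Therefore $M\in$ cogen$^{*}(C)$.

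For (2): apply $(-)^{C}=\Hom_R(-,C)$ to the coresolution just produced. By the $\Hom_R(-,C)$-exactness established above this yields an exact sequence
$$\cdots\to(C^{n_1})^{C}\to(C^{n_0})^{C}\to M^{C}\to0 .$$
By the homothety isomorphism $R\cong\Hom_R(C,C)$ in the definition of a semidualizing module we have $(C^{n_i})^{C}=\Hom_R(C^{n_i},C)\cong\Hom_R(C,C)^{n_i}\cong R^{n_i}$, so this is a resolution of $M^{C}$ by finitely generated free modules; since $\Hom_R(R,-)$ is naturally the identity functor, it trivially preserves the exactness of this sequence. Hence $M^{C}\in$ gen$^{*}(R)$.

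The step requiring care is the truncation in (1): one must ensure that cutting $C\otimes_R F$ down to the finite piece $C^{n_0}$ leaves the cokernel $L_1$ inside the class of finitely generated $G_C$-projective modules. This is exactly why we first arrange that $F$ is free — so that a finitely generated submodule of $C\otimes_R F$ automatically sits inside a finite direct summand of the form $C^{n_0}$ — and why we invoke that $\mathcal{GP}_C(R)$ is closed under direct summands together with the fact that the relevant cosyzygy $M_1$ is $G_C$-projective. Once (1) is available, part (2) is essentially formal.
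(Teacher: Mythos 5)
Your proof is correct and follows essentially the same route as the paper's: embed $M$ into $C\otimes_R F$ with $F$ free and $G_C$-projective cokernel, truncate to a finite subsum $C^{n_0}$ using finite generation of $M$ so that the new cokernel is a finitely generated direct summand of a $G_C$-projective module, iterate, and use $\Ext^{\geq1}_R(-,C)$-vanishing on $G_C$-projectives to get $\Hom_R(-,C)$-exactness of the coresolution, then apply $(-)^C$ and the homothety isomorphism for (2). The only differences are cosmetic: you build the free-module embedding by stabilizing a projective one and you spell out the Ext-vanishing step, which the paper handles by citing [9].
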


\begin{proof}
(1) Because $M$ is $G_C$-projective, there is an exact sequence
$0 \to M\to C\otimes_R F \to G\to 0$, in which $F$ is free
and $G$ is $G_C$-projective by [9, Observation 2.3 and Proposition 2.9].
Since $M$ is also finitely generated, there exist a finitely generated
free submodule $R^{\alpha_0}$ with ${\alpha_0}$ an integer,
and a free submodule $F^\prime$ of $F$,
such that $M\subseteq C\otimes_R R^{\alpha_0}$ and $F=R^{\alpha_0}\oplus F^\prime$.
Setting $H=\Coker( M\to C\otimes_R R^{\alpha_0})$ yields an exact sequence
$$0 \to M\to C\otimes_R R^{\alpha_0} \to H\to 0$$
with $C\otimes_R R^{\alpha_0}\cong C^{\alpha_0}$ $\in\add_RC$
and $H\oplus (C\otimes_R F^\prime)\cong G$.
From [9, Theorem 2.8], we know that $H$ is finitely generated $G_C$-projective.
Repeating this process to $H$ and so on, one has an exact sequence
$$0\rightarrow M\rightarrow C^{\alpha_0}\rightarrow C^{\alpha_1}\rightarrow \cdots\eqno{(\ast)}$$
with each image finitely generated $G_C$-projective,
which implies the sequence $(\ast)$ is exact after applying $\Hom_R(-, C)$.
Therefore, $M\in$ cogen$^{*}(C)$.

(2) From (1), applying $\Hom_R(-, C)$ to $(\ast)$ provides an exact sequence
$$
\cdots\rightarrow (C^{\alpha_1})^C\rightarrow (C^{\alpha_0})^C
\rightarrow M^C\rightarrow 0
$$
Since $(C^{\alpha_i})^C\cong\Hom_R(C, C)^{\alpha_i}\cong R^{\alpha_i}$,
the desired result follows.
\end{proof}


The following theorem plays a crucial role in proving the main result in this paper.

\begin{theorem} A ring R is coherent if every finitely generated submodule of a
$G_C$-projective $R$-module is $G_C$-projective.
\end{theorem}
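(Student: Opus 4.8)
The plan is to reduce coherence to one claim: \emph{every finitely generated $G_C$-projective $R$-module is finitely presented}. Granting this, any finitely generated ideal $I$ of $R$ is a finitely generated submodule of the free (hence $G_C$-projective) module $R$, so $I$ is $G_C$-projective by hypothesis and therefore finitely presented; as every finitely generated ideal of $R$ is then finitely presented, $R$ is coherent. Throughout I write $(-)^{C}=\Hom_R(-,C)$.

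Before the claim I would record two facts about a finitely generated $G_C$-projective module $M$. (a) \emph{$M$ is $C$-reflexive}. By Lemma 3.1(1), $M\in\mathrm{cogen}^{*}(C)$ via an exact sequence $(\ast)$: $0\to M\to C^{\alpha_0}\to C^{\alpha_1}\to\cdots$ with each $\alpha_i$ finite, which remains exact after applying $(-)^{C}$; hence $\cdots\to R^{\alpha_1}\to R^{\alpha_0}\to M^{C}\to 0$ is a resolution of $M^{C}$ by finitely generated free modules, and dualizing it again --- using that every module in $\mathrm{add}_R C$ is $C$-reflexive, a consequence of $R\cong\Hom_R(C,C)$ --- identifies $\ker(C^{\alpha_0}\to C^{\alpha_1})$ with $M^{CC}$; comparing with $(\ast)$ shows the biduality map $M\to M^{CC}$ is an isomorphism. (b) By Lemma 3.1(2), $M^{C}\in\mathrm{gen}^{*}(R)$, so $M^{C}$ admits an exact presentation $R^{\gamma_1}\to R^{\gamma_0}\to M^{C}\to 0$; in particular $M^{C}$ is finitely presented, hence finitely generated.

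The crucial step is: \emph{if $M$ is finitely generated $G_C$-projective, then so is $M^{C}$.} Pick a surjection $P_0\twoheadrightarrow M$ with $P_0$ finitely generated free and set $\Omega=\Ker(P_0\to M)$. Since $M$ is $G_C$-projective, $\Ext^1_R(M,C)=0$ (immediate from Definition 2.3 with the projective module taken to be $R$), so applying $(-)^{C}$ to $0\to\Omega\to P_0\to M\to 0$ yields an exact sequence
$$0\to M^{C}\to\Hom_R(P_0,C)\to\Omega^{C}\to 0.$$
Here $\Hom_R(P_0,C)\cong C^{n}$ ($n=\mathrm{rank}\,P_0$) is $C$-projective, hence $G_C$-projective, and by (b) the submodule $M^{C}$ is finitely generated; so $M^{C}$ is $G_C$-projective by hypothesis. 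Applying (b) now to the finitely generated $G_C$-projective module $M^{C}$, the module $M^{CC}=(M^{C})^{C}$ lies in $\mathrm{gen}^{*}(R)$ and is thus finitely presented; by (a), $M\cong M^{CC}$, so $M$ is finitely presented. This proves the claim, and with it the theorem.

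The step I expect to be the real obstacle is this last one: a head-on attempt to show ``finitely generated $G_C$-projective $\Rightarrow$ finitely presented'' by bounding a syzygy $\Omega\subseteq P_0$ is circular with coherence itself. What breaks the circle is to invoke the hypothesis not for $M$ but for $M^{C}$ --- possible because $M^{C}$ sits as a finitely generated submodule of the $C$-projective module $\Hom_R(P_0,C)$ --- and then to carry the resulting finiteness back to $M$ along the reflexivity isomorphism $M\cong M^{CC}$. The points to handle with care are the verification that $M^{C}$ is actually finitely generated (so that the hypothesis applies) and that it is the biduality map, and not merely some isomorphism, that gives $M\cong M^{CC}$; both come from Lemma 3.1.
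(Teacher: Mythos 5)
Your proposal is correct and takes essentially the same approach as the paper: use Lemma 3.1 to see $M^C$ is finitely generated, embed $M^C$ into $C^{n}$ so the hypothesis applies to $M^C$, and transfer the resulting finite presentation of $M^{CC}$ back to $M$ via the natural biduality comparison with the cogen$^{*}(C)$-sequence. The only cosmetic difference is your appeal to $\Ext^1_R(M,C)=0$ to produce a short exact sequence where plain left exactness of $\Hom_R(-,C)$ already gives the needed embedding $M^C\hookrightarrow\Hom_R(P_0,C)$, exactly as in the paper.
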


\begin{proof}
Let $M$ be a finitely generated submodule of a projective $R$-module.
By the hypothesis, $M$ is $G_C$-projective since every projective $R$-module
is $G_C$-projective from [9, Proposition 2.6].
It follows from Lemma 3.1 (2) that $M^C\in$ gen$^{*}(R)$,
and hence it is finitely generated.
On the other hand, since $M$ is finitely generated,
there is an exact sequence
$$0 \to K\to F_0 \to M\to 0,$$ 
where $F_0=R^\alpha$ is finitely generated free.
Applying $\Hom_R(-, C)$ to this short exact sequence gives rise to 
the exactness of $0\rightarrow M^C\rightarrow (R^{\alpha})^C$.
Since $(R^{\alpha})^C=\Hom_R(R^{\alpha}, C)$ $\cong$ $C^{\alpha}$ 
is $G_C$-projective by [9, Proposition 2.6] again, 
the assumption yields that $M^C$ is $G_C$-projective.
Replacing $M$ with $M^C$ in Lemma 3.1, 
we get that $M^{CC}\in$ gen$^{*}(R)$, and hence finitely presented.

From Lemma 3.1 (1), we know that $M\in$ cogen$^{*}(C)$.
Consider the following commutative diagram with exact rows:
$$\begin{array}{ccccccccc}
0 & \rightarrow & M & \rightarrow & C^{\alpha_0}& 
\rightarrow& C^{\alpha_1} &\rightarrow & \cdots \\
& & \downarrow &   & \downarrow  &   & \downarrow &   &  \\
0 & \rightarrow & M^{CC} & \rightarrow & (C^{\alpha_0})^{CC} & 
\rightarrow& (C^{\alpha_1})^{CC} & \rightarrow & \cdots
\end{array}$$
Because $(C^{\alpha_i})^{CC}\cong (R^{\alpha_i})^{C}\cong C^{\alpha_i}$
for each $i\geq 0$, $M\cong M^{CC}$, and consequently finitely presented.
Thus, every finitely generated ideal of $R$ is finitely presented,
and so $R$ is coherent.
\end{proof}


To prove the coherence of $G_C$-hereditary rings, we need the following result,
which gives some other descriptions of $G_C$-hereditary rings.

\begin{proposition}
Let $R$ be a ring, the following are equivalent.

(1) $R$ is $G_C$-hereditary.

(2) Every submodule of a $G_C$-projective $R$-module is $G_C$-projective.

(3) Every quotient module of a $G_C$-injective $R$-module is $G_C$-injective.
\end{proposition}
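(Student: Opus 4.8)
The plan is to prove $(1)\Leftrightarrow(2)$ directly on the $G_C$-projective side and then $(1)\Leftrightarrow(3)$ by dualizing, using the theorem of Zhao and Sun [10] that the supremum of the $G_C$-projective dimensions of all $R$-modules equals the supremum of their $G_C$-injective dimensions in order to move between the two formulations. Two facts will be used throughout: (a) every projective $R$-module is $G_C$-projective and, dually, every injective $R$-module is $G_C$-injective (by [9, Proposition 2.6] and its dual); and (b) the dimension-shifting estimates along a short exact sequence $0\to A\to B\to D\to0$, namely $G_C\text{-}\pd_R(A)\leq\max\{G_C\text{-}\pd_R(B),G_C\text{-}\pd_R(D)-1\}$ together with the dual inequality $G_C\text{-}\id_R(D)\leq\max\{G_C\text{-}\id_R(B),G_C\text{-}\id_R(A)-1\}$; these rest on the closure of $\mathcal{GP}_C(R)$ under extensions and kernels of epimorphisms and of $\mathcal{GI}_C(R)$ under extensions and cokernels of monomorphisms, which is available from [9] (see the results cited in the proof of Lemma 3.1).

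The implication $(2)\Rightarrow(1)$ is immediate: a submodule of a projective module is, by (a), a submodule of a $G_C$-projective module, hence $G_C$-projective by (2), which is exactly the defining property of a $G_C$-hereditary ring. For $(1)\Rightarrow(2)$ I would use that (1) means $G_C\text{-}\gldim(R)\leq1$, so every $R$-module has $G_C$-projective dimension at most $1$; in particular so does $M/N$ when $N$ is a submodule of a $G_C$-projective module $M$. Applying the first estimate in (b) to $0\to N\to M\to M/N\to0$ then gives $G_C\text{-}\pd_R(N)\leq\max\{0,1-1\}=0$, so $N$ is $G_C$-projective. (More concretely, one can resolve $M/N$ by $0\to G_1\to G_0\to M/N\to0$ with $G_0,G_1$ $G_C$-projective, pull back along $M\twoheadrightarrow M/N$ to obtain $0\to G_1\to X\to M\to0$ and $0\to N\to X\to G_0\to0$; the first sequence shows $X$ is $G_C$-projective by closure under extensions, and the second then shows $N$ is $G_C$-projective by closure under kernels of epimorphisms.)

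For $(1)\Leftrightarrow(3)$, I would first invoke [10] to rephrase (1) as the statement that every $R$-module has $G_C$-injective dimension at most $1$. Granting this, if $M$ is $G_C$-injective with a quotient $M/N$, then in $0\to N\to M\to M/N\to0$ we have $G_C\text{-}\id_R(M)=0$ and $G_C\text{-}\id_R(N)\leq1$, so the dual estimate in (b) yields $G_C\text{-}\id_R(M/N)\leq\max\{0,1-1\}=0$, that is, $M/N$ is $G_C$-injective; hence $(1)\Rightarrow(3)$. Conversely, assuming (3), take any $R$-module $M$ and embed it in an injective module $E$; by (a), $E$ is $G_C$-injective, so $E/M$ is a quotient of a $G_C$-injective module and hence $G_C$-injective by (3). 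Then $0\to M\to E\to E/M\to0$ is a $\mathcal{GI}_C(R)$-coresolution of $M$ of length at most $1$, so $G_C\text{-}\id_R(M)\leq1$; as $M$ was arbitrary, (1) follows via [10].

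I expect the step needing the most care to be (b): one must know that $\mathcal{GP}_C(R)$ is projectively resolving and $\mathcal{GI}_C(R)$ is injectively coresolving, since this is what validates the one-step dimension bounds (and, in the pullback variant, the explicit closure statements). Once those properties---equivalently, the dimension-shifting inequalities themselves---are taken from [9]/[10], the remaining arguments are purely formal. The other genuinely external input is the equality of the two global-dimension suprema from [10], which is precisely what allows the proof to pass between the $G_C$-projective description in (2) and the $G_C$-injective description in (3).
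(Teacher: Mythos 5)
Your proposal is correct, and on the equivalence $(1)\Leftrightarrow(2)$ it is essentially the paper's own argument: the paper proves $(1)\Rightarrow(2)$ by applying $G_C$-$\pd_R(G/M)\leq 1$ to $0\to M\to G\to G/M\to 0$ and invoking [9, Proposition 2.12] (your one-step dimension shift, or equivalently your pullback argument via the projectively resolving property from [9]), and proves $(2)\Rightarrow(1)$ exactly as you do. The only real difference is how condition $(3)$ is handled: the paper disposes of $(2)\Leftrightarrow(3)$ by a single citation of [10, Theorem 4.4], whereas you prove $(1)\Leftrightarrow(3)$ directly, importing from [10] only the equality of the suprema of $G_C$-projective and $G_C$-injective dimensions and then running the dual dimension-shift (which needs $\mathcal{GI}_C(R)$ to be closed under extensions and cokernels of monomorphisms, i.e. the dual of White's resolving results --- the same dual statements the paper itself invokes elsewhere, e.g. in Lemma 3.6). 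Your route is slightly more self-contained and makes explicit exactly which closure properties and which result of [10] are being used; the paper's route is shorter but leans on a packaged equivalence from [10]. Both are valid, and your identification of the coresolving property as the point needing care is exactly right.
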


\begin{proof}
$(1)\Rightarrow (2)$ 
Let $M$ be a submodule of a $G_C$-projective $R$-module $G$,
one has an exact sequence
$0\rightarrow M\rightarrow G\rightarrow G/M\rightarrow 0$.
Because $R$ is $G_C$-hereditary, $G_C$-$\pd_R(G/M)\leq 1$,
and so $M$ is $G_C$-projective by [9, Proposition 2.12].

$(2)\Rightarrow (1)$ Suppose that $M$ is an $R$-module.
There is an exact sequence
$0\rightarrow K\rightarrow P\rightarrow M\rightarrow 0$
with $P$ projective, and $K=\Ker(P\rightarrow M)$.
Because every projective $R$-module is $G_C$-projective,
the hypothesis yields that $K$ is $G_C$-projective.
Thus $G_C$-$\pd_R(M)\leq 1$, as desired.

$(2)\Leftrightarrow (3)$ Follows from [10, Theorem 4.4].
\end{proof}

\begin{corollary}
Every $G_C$-hereditary ring is coherent.
\end{corollary}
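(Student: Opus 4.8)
The plan is to deduce this at once from the two preceding results, with essentially no extra work. First I would invoke Proposition 3.3: since $R$ is $G_C$-hereditary, condition (1) of that proposition holds, hence so does condition (2), namely every submodule of a $G_C$-projective $R$-module is again $G_C$-projective. In particular, every \emph{finitely generated} submodule of a $G_C$-projective $R$-module is $G_C$-projective.

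This last statement is precisely the hypothesis of Theorem 3.2, so I would then apply Theorem 3.2 directly to conclude that $R$ is coherent, which finishes the proof. There is no genuine obstacle here; the only point worth emphasizing is that Theorem 3.2 was deliberately formulated with the weaker requirement involving only finitely generated submodules, and it is exactly this weakening that makes it mesh cleanly with the characterization in Proposition 3.3. One could instead route the argument through the injective side using the equivalence $(1)\Leftrightarrow(3)$ of Proposition 3.3, but the projective formulation is the one that feeds verbatim into Theorem 3.2, so that is the most economical path. (This corollary also settles the coherence half of Question A from the introduction, leaving only the implication ``$G_C$-hereditary $\Rightarrow$ $G_C$-semihereditary'' to be addressed separately.)
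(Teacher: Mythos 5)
Your proposal is correct and follows exactly the paper's own route: Proposition 3.3 gives that every (in particular every finitely generated) submodule of a $G_C$-projective module is $G_C$-projective, and Theorem 3.2 then yields coherence. This matches the paper's one-line proof of the corollary.
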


\begin{proof}
The conclusion follows from Theorem 3.2 and Proposition 3.3.
\end{proof}

In the special case $C=R$, we obtain the main result of [3, Theorem 2.5].

\begin{corollary}
All Gorenstein hereditary rings are coherent.
\end{corollary}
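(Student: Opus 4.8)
The plan is to specialize Corollary 3.4 to the case $C = R$, so the only real work is to check that the relative notions of this paper reduce to the classical Gorenstein-homological ones. First I would verify that $R$ is itself a semidualizing $R$-module: it admits the degreewise finite projective resolution $0 \to R \to R \to 0$, the homothety map $R \to \Hom_R(R,R)$ is the canonical isomorphism, and $\Ext^i_R(R,R) = 0$ for all $i \geq 1$ since $R$ is projective over itself. Hence all the results of Sections 2 and 3 are available with $C = R$.

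Next I would identify the notions involved. Since $R \otimes_R P \cong P$ for every projective module $P$, the complex $\mathbb{X}$ appearing in Definition 2.3 becomes an ordinary doubly infinite exact complex of projective modules, and the exactness of $\Hom_R(\mathbb{X}, C \otimes_R P) = \Hom_R(\mathbb{X}, P)$ for every projective $P$ is exactly the condition defining a complete projective resolution. Thus a $G_R$-projective module is precisely a Gorenstein projective module in the sense of [4], so $G_R$-$\gldim(R)$ agrees with the Gorenstein global dimension of $R$, and a $G_R$-hereditary ring (Definition 2.7) is exactly a Gorenstein hereditary ring, i.e. a ring of Gorenstein global dimension at most $1$.

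With these identifications, the statement is immediate: Corollary 3.4 says every $G_R$-hereditary ring is coherent, which is precisely the assertion that every Gorenstein hereditary ring is coherent, recovering [3, Theorem 2.5]. There is no genuine obstacle; the only step requiring any care is the reduction of the $C$-relative definitions to the absolute Gorenstein setting, and this is routine once the natural isomorphism $R \otimes_R - \cong \mathrm{Id}$ is invoked.
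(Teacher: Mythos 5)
Your proposal is correct and is essentially the paper's argument: the paper obtains this corollary precisely by specializing Corollary 3.4 to $C=R$, after noting that $R$ is semidualizing and the $G_R$-notions coincide with the classical Gorenstein ones. Your write-up merely makes these routine identifications explicit.
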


Before starting to study the $G_C$-semihereditary rings, 
we first give some equivalent characterizations of $G_C$-flat dimension.
\begin{lemma}
Assume that $R$ is a coherent ring, and $M$ an $R$-module with $G_C$-$\fd_R(M)<\infty$.
For an nonnegative integer $n$, the following are equivalent.

(1) $G_C$-$\fd_R(M)\leq n$.

(2) $\Tor_{i>n}^R(M, \Hom_R(C, I)) = 0$ for any injective module $I$.

(3) In every exact sequence $0\to K_n\to G_{n-}1\to\cdots\to G_0\to M\to 0$, where the
$G_i$ are $G_C$-flat, one has that $K_n$ is also $G_C$-flat.
\end{lemma}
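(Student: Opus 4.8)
The plan is to prove the cycle of implications $(1)\Rightarrow(3)\Rightarrow(2)\Rightarrow(1)$, which is the standard shape for a ``finitistic dimension'' characterization. Throughout, the coherence hypothesis is used so that $\Hom_R(C,I)$ behaves well under tensor: since $C$ admits a degreewise finite projective resolution, over a coherent ring the $C$-injective modules $\Hom_R(C,I)$ form the test class detecting $G_C$-flatness, and $\Tor$ against them commutes with the relevant direct limits and dimension shifts. I would open by recording these facts (citing White [5] and Holm--J\o rgensen for the $G_C$-flat theory, or [10] if that is where the coherent-case results live in this paper's bibliography).

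First I would do $(1)\Rightarrow(3)$: assume $G_C$-$\fd_R(M)\le n$ and take any exact sequence $0\to K_n\to G_{n-1}\to\cdots\to G_0\to M\to 0$ with the $G_i$ being $G_C$-flat. The key input is that the class of $G_C$-flat modules is closed under kernels of epimorphisms between its members when the cokernel has finite $G_C$-flat dimension — more precisely, one uses the ``resolution lemma'' for $G_C$-flat dimension together with a dimension-shifting argument: comparing the given partial resolution with a $G_C$-flat resolution of length $\le n$ via the generalized Schanuel lemma, the two $n$-th syzygies differ by a $G_C$-flat summand, so $K_n$ is $G_C$-flat. This is where I would cite the analogue of [9, Proposition 2.12] (the bound-shifting statement used already in Proposition 3.3) transported to the flat setting.

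Next, $(3)\Rightarrow(2)$ is immediate by taking the tail of a flat (hence $G_C$-flat) resolution of $M$: build $0\to K_n\to F_{n-1}\to\cdots\to F_0\to M\to 0$ with the $F_i$ flat; by (3) the module $K_n$ is $G_C$-flat, hence $\Tor^R_i(K_n,\Hom_R(C,I))=0$ for all $i\ge 1$ and all injective $I$ (this vanishing is part of the definition/basic properties of $G_C$-flat modules over a coherent ring), and dimension-shifting along the resolution gives $\Tor^R_{i}(M,\Hom_R(C,I))\cong\Tor^R_{i-n}(K_n,\Hom_R(C,I))=0$ for all $i>n$.

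Finally, $(2)\Rightarrow(1)$ uses the hypothesis $G_C$-$\fd_R(M)=m<\infty$: pick a $G_C$-flat resolution realizing this, so $0\to G_m\to\cdots\to G_0\to M\to 0$ with all $G_i$ $G_C$-flat; if $m\le n$ we are done, otherwise set $L=\Ker(G_{n-1}\to G_{n-2})$ (or $G_0\to M$ if $n=0$), which has $G_C$-$\fd_R(L)\le m-n$ and, by dimension-shifting from (2), satisfies $\Tor^R_i(L,\Hom_R(C,I))=0$ for all $i\ge 1$. The crux is then to show that an $R$-module of finite $G_C$-flat dimension with vanishing higher $\Tor$ against all $C$-injectives is actually $G_C$-flat; this is the $G_C$-flat analogue of the fact that $G$-flat dimension zero is detected by $\Tor$-vanishing, and over a coherent ring it follows from the characterization of $G_C$-flat modules via complete $\mathcal{F}\mathcal{F}_C$-resolutions together with the finiteness of $G_C$-$\fd$. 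I expect this last step — promoting ``$\Tor$-acyclic of finite $G_C$-flat dimension'' to ``$G_C$-flat'' — to be the main obstacle, since it requires either a known lifting/characterization theorem for the coherent case or an explicit splicing of the finite $G_C$-flat resolution of $L$ with a flat coresolution to manufacture the required totally acyclic complex of flats; I would isolate it as the one place a nontrivial earlier result (the coherent-ring theory of $G_C$-flat modules) is genuinely invoked.
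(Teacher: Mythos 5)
The paper's own ``proof'' of this lemma is a two-line citation: the equivalences are delegated wholesale to [11, Theorem 3.8] together with the dual version of [9, Proposition 2.12], i.e.\ to the coherent-ring theory of $G_C$-flat modules. Your proposal expands this into an explicit cycle $(1)\Rightarrow(3)\Rightarrow(2)\Rightarrow(1)$, and the genuinely load-bearing ingredients you use are the same ones the paper cites: that $G_C$-flat modules are $\Tor_{i\geq 1}^R(-,\Hom_R(C,I))$-acyclic, that flat modules are $G_C$-flat over a coherent ring (needed in your $(3)\Rightarrow(2)$; this is [11, Corollary 3.9], used elsewhere in the paper), and the ``crux'' you isolate in $(2)\Rightarrow(1)$ --- a module of finite $G_C$-flat dimension with vanishing higher $\Tor$ against all modules $\Hom_R(C,I)$ is $G_C$-flat --- which is exactly the part of [11, Theorem 3.8] that the paper invokes. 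Granting those citations, your $(3)\Rightarrow(2)$ and $(2)\Rightarrow(1)$ arguments (dimension shifting, and the bound $G_C$-$\fd_R(L)\leq m-n$ for the $n$-th kernel of a minimal-length resolution) are correct.

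The one step that would fail as written is the mechanism you propose for $(1)\Rightarrow(3)$: a generalized Schanuel comparison does not apply to resolutions by $G_C$-flat modules. Schanuel-type arguments need the middle terms (of at least one of the two sequences) to be projective so that the relevant pullback sequences split; for merely $G_C$-flat terms there is no such splitting, and the conclusion that ``the two $n$-th syzygies differ by a $G_C$-flat summand'' is simply not available --- indeed, making kernel-comparison statements work for $G_C$-flat resolutions is precisely the nontrivial content of the theorem you would otherwise be citing. The standard repair (and in effect what [11, Theorem 3.8], the flat analogue of [9, Proposition 2.12], encodes, following Holm's treatment of Gorenstein flat dimension over coherent rings) is to reorder the cycle as $(1)\Rightarrow(2)\Rightarrow(3)\Rightarrow(1)$: deduce (2) from (1) by dimension shifting along a $G_C$-flat resolution of length at most $n$; then, given a sequence as in (3), note that $K_n$ has finite $G_C$-flat dimension and, again by shifting, $\Tor_{i\geq 1}^R(K_n,\Hom_R(C,I))=0$, so the crux result makes $K_n$ $G_C$-flat; finally $(3)\Rightarrow(1)$ is immediate by applying (3) to a flat resolution. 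So your outline is essentially the right expansion of the paper's citations, but you should replace the Schanuel step by this route (or by the citation itself) and keep the crux as a reference to [11], since proving it from scratch is where the real work lies.
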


\begin{proof}
The conclusion follows from [11, Theorem 3.8] and
the dual version of [9, Proposition 2.12].
\end{proof}

\begin{lemma}
Let $R$ be a ring, the following are equivalent.

(1) $R$ is $G_C$-semihereditary.

(2) $R$ is coherent and $G_C$-$w\gldim(R)\leq 1$.

(3) R is coherent and every submodule of a $G_C$-flat $R$-module is $G_C$-flat.
\end{lemma}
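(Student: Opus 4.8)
The plan is to establish the cycle of implications $(1)\Rightarrow(2)\Rightarrow(3)\Rightarrow(1)$, with Lemma 3.7 carrying the real content in the step $(2)\Rightarrow(3)$. Throughout I will use the standard fact that every flat $R$-module is $G_C$-flat (see [5]), which is the flat analogue of [9, Proposition 2.6].

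For $(1)\Rightarrow(2)$: a $G_C$-semihereditary ring is coherent by definition, so only the bound $G_C$-$w\gldim(R)\leq 1$ needs to be checked. Given an arbitrary $R$-module $M$, I would pick an epimorphism $F\rightarrow M$ with $F$ flat and set $K=\Ker(F\rightarrow M)$. Then $K$ is a submodule of the flat module $F$, hence $G_C$-flat by hypothesis, and since $F$ is itself $G_C$-flat the sequence $0\rightarrow K\rightarrow F\rightarrow M\rightarrow 0$ is a $G_C$-flat resolution of $M$ of length at most $1$. Consequently $G_C$-$\fd_R(M)\leq 1$ for every $M$, that is, $G_C$-$w\gldim(R)\leq 1$.

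For $(2)\Rightarrow(3)$: coherence is part of (2), so fix a submodule $M$ of a $G_C$-flat module $G$ and form the exact sequence $0\rightarrow M\rightarrow G\rightarrow G/M\rightarrow 0$. Since $G_C$-$w\gldim(R)\leq 1$ we have $G_C$-$\fd_R(G/M)\leq 1<\infty$, so Lemma 3.7 applies to $G/M$ with $n=1$; reading the above sequence as the case $n=1$ of condition (3) of that lemma, with $G_0=G$ (which is $G_C$-flat) and $K_1=M$, the implication $(1)\Rightarrow(3)$ of Lemma 3.7 yields that $M$ is $G_C$-flat. This is the heart of the argument, and the one point requiring care is that Lemma 3.7 presupposes finite $G_C$-flat dimension, which is exactly what the global-dimension bound in (2) supplies.

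For $(3)\Rightarrow(1)$: every flat $R$-module is $G_C$-flat, so a submodule of a flat module is in particular a submodule of a $G_C$-flat module and is therefore $G_C$-flat by (3); combined with the coherence hypothesis in (3), this is precisely the definition of $G_C$-semihereditary, closing the cycle. I do not expect a genuine obstacle here: once Lemma 3.7 is available the whole statement is a formal manipulation of short exact sequences, and the mildest subtlety is simply to verify the finiteness hypothesis of that lemma before invoking it.
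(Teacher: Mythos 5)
Your proof is correct and takes essentially the same route as the paper: a flat (the paper uses projective) presentation plus the definition for $(1)\Rightarrow(2)$, the finite-dimension characterization lemma (which is Lemma 3.6 in the paper, not Lemma 3.7) applied to $G/M$ with $n=1$ for $(2)\Rightarrow(3)$, and ``flat implies $G_C$-flat'' for $(3)\Rightarrow(1)$. The only caution is that the fact that every flat module is $G_C$-flat is not an unconditional standard fact from [5]; the paper invokes it via [11, Corollary 3.9], which assumes $R$ coherent --- fortunately coherence is available at each point where you use it, so no actual gap results.
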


\begin{proof}
$(1)\Rightarrow (2)$ Let $M$ be an $R$-module.
There is an exact sequence
$0\rightarrow K\rightarrow P\rightarrow M\rightarrow 0$
with $P$ projective. By the hypothesis, $K$ is $G_C$-flat.
Because $R$ is coherent, every flat $R$-module is $G_C$-flat from [11, Corollary 3.9].
Thus $G_C$-$\fd_R(M)\leq 1$, as desired.

$(2)\Rightarrow (3)$ Let $M$ be a submodule of a $G_C$-flat module $G$.
Consider the exact sequence
$0\rightarrow M\rightarrow G\rightarrow G/M\rightarrow 0$.
The assumption yields $G_C$-$\fd_R(G/M)\leq 1$,
and so $M$ is $G_C$-flat by Lemma 3.6.

$(3)\Rightarrow (1)$ Because $R$ is coherent,
every flat $R$-module is $G_C$-flat from [11, Corollary 3.9].
The assertion obviously holds.
\end{proof}

The next result, together with Corollary 3.4, 
gives an affirmative answer to Question A.

\begin{theorem}
If $R$ is a $G_C$-hereditary ring, then it is $G_C$-semihereditary.
\end{theorem}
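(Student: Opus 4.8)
The plan is to derive the conclusion from Corollary 3.4 and Lemma 3.7. By Corollary 3.4 a $G_C$-hereditary ring $R$ is coherent, so by the equivalence $(1)\Leftrightarrow(2)$ of Lemma 3.7 it suffices to prove $G_C$-$w\gldim(R)\le 1$. Being $G_C$-hereditary, $R$ satisfies $G_C$-$\gldim(R)\le 1$, i.e.\ $G_C$-$\pd_R(M)\le 1$ for every $R$-module $M$, so the whole statement reduces to the inequality $G_C$-$\fd_R(M)\le G_C$-$\pd_R(M)$ over the coherent ring $R$.

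To get this I would use the fact that over a coherent ring every $G_C$-projective module is $G_C$-flat. Granting it, fix an $R$-module $M$ and an exact sequence $0\to K\to P\to M\to 0$ with $P$ projective. Then $K$ is a submodule of the $G_C$-projective module $P$, hence $G_C$-projective by Proposition 3.3 (equivalently, $G_C$-$\pd_R(M)\le 1$ and [9, Proposition 2.12] give that $K$ is $G_C$-projective), and therefore $G_C$-flat; meanwhile $P$ is flat, hence $G_C$-flat by [11, Corollary 3.9]. Thus $0\to K\to P\to M\to 0$ is a $G_C$-flat resolution of $M$, so $G_C$-$\fd_R(M)\le 1$; as $M$ was arbitrary, $G_C$-$w\gldim(R)\le 1$, and Lemma 3.7 yields that $R$ is $G_C$-semihereditary. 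One could instead quote the inequality $G_C$-$w\gldim(R)\le G_C$-$\gldim(R)$ from [10] and skip the syzygy step altogether.

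The step I expect to be the main obstacle is precisely the claim that a $G_C$-projective module over a coherent ring is $G_C$-flat. If this is not already recorded in [11], here is how I would argue it. Let $G$ be $G_C$-projective, with a defining exact complex $\mathbb{X}=\cdots\to P_1\to P_0\to C\otimes_RP_{-1}\to\cdots$ of projective modules such that $\Hom_R(\mathbb{X},C\otimes_RQ)$ is exact for all projective $Q$. The modules $P_i$ and $C\otimes_RP_{-i}$ are flat and $G\cong\im(P_0\to C\otimes_RP_{-1})$, so it suffices to show that $\Hom_R(C,I)\otimes_R\mathbb{X}$ is exact for every injective $R$-module $I$. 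Since $\mathbb{X}$ is exact and, by [9], all of its cycles are again $G_C$-projective, this amounts to the vanishing $\Tor_{i\geq 1}^R(\Hom_R(C,I),G')=0$ for every $G_C$-projective $G'$ and every injective $I$. This last vanishing is the delicate part: one uses that an injective module $I$ lies in the Bass class of $C$, so that $C\otimes_R\Hom_R(C,I)\cong I$ and $\Tor_{i\geq 1}^R(C,\Hom_R(C,I))=0$, whence $\Tor_{i\geq 1}^R(C\otimes_RQ,\Hom_R(C,I))=0$ for projective $Q$, and then one propagates the vanishing to $G'$ by dimension shifting along the short exact sequences $0\to G'\to C\otimes_RQ\to G''\to 0$ and $0\to \Omega G'\to Q'\to G'\to 0$ (with $G''$ and $\Omega G'$ again $G_C$-projective) together with the $\Hom$-exactness built into $\mathbb{X}$.
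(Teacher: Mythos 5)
Your reduction is exactly the paper's: coherence comes from Corollary 3.4, and Lemma 3.7 $(1)\Leftrightarrow(2)$ reduces everything to $\mathrm{G}_C$-$w\gldim(R)\leq 1$. Your parenthetical fallback --- simply quoting $\mathrm{G}_C$-$w\gldim(R)\leq \mathrm{G}_C$-$\gldim(R)$ --- is literally the paper's proof (it cites [10, Corollary 4.6] for this inequality), so that version of your argument is complete and essentially identical to the paper's two-line proof.

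Your preferred route, however, tries to reprove that inequality by showing that over a coherent ring every $G_C$-projective module is $G_C$-flat, and the sketch you give of this lemma has a genuine gap at exactly the step you flag. You want $\Tor^R_{i\geq 1}(\Hom_R(C,I),G')=0$ for every $G_C$-projective $G'$ and injective $I$, and you propose dimension shifting. But shifting along the coresolution $0\to G'\to C\otimes_R Q_0\to G''\to 0$ only yields $\Tor_i(\Hom_R(C,I),G')\cong\Tor_{i+1}(\Hom_R(C,I),G'')$ (using $\Tor_{\geq 1}(\Hom_R(C,I),C\otimes_R Q_0)=0$ from the Bass class), i.e.\ it pushes the index up indefinitely and never produces vanishing; while shifting down along $0\to\Omega G'\to Q'\to G'\to 0$ reduces the problem to the same vanishing for $\Omega G'$, which is not known to be $G_C$-projective (the left half of a complete resolution is just an ordinary projective resolution of $G'$), so the induction has no anchor. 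Tellingly, your sketch never uses coherence, yet the claim ``$G_C$-projective implies $G_C$-flat'' is the relative analogue of ``Gorenstein projective implies Gorenstein flat,'' which is not accessible by such elementary shifting; over coherent rings it is proved by a different mechanism, namely showing the character module $\Hom_R(G',E)$ is $G_C$-injective and invoking the coherent-ring characterization of $G_C$-flatness via character modules (the kind of result in [11]). So for the key step you should either cite such a result or, as the paper does, cite [10, Corollary 4.6] directly; the shifting argument as written does not close.
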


\begin{proof}
The coherence of $R$ follows from Corollary 3.4.
Because $\mathrm{G}_C$-$w\gldim(R)$ $\leq$ 
$\mathrm{G}_C$-$\gldim(R)\leq 1$ by [10, Corollary 4.6],
Lemma 3.7 implies that $R$ is $G_C$-semihereditary.
\end{proof}

Recall that an $R$-module $M$ is called $FP$-injective if $\Ext^1_R(N, M) = 0$ for all
finitely presented modules $N$. The $FP$-injective dimension of $M$, denoted
by $FP$-$\id(M)$, is defined to be the least nonnegative integer $n$ such that
$\Ext^{n+1}_R(N, M) = 0$ for all finitely presented modules $N$.
If no such $n$ exists, set $FP$-$\id(M)=\infty$.

\begin{lemma}
Let $R$ be a coherent ring, the following are equivalent.

(1) $FP$-$\id(C\otimes_R P)\leq n$ for any projective module $P$,

(2) $\fd(\Hom_R (C, I))\leq n$ for any injective module $I$.
\end{lemma}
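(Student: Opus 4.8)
The plan is to show that each of (1) and (2) is equivalent to the single condition $FP\text{-}\id_R(C)\leq n$, and then chain the two equivalences.

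The heart of the argument is the following homological identity, which I would establish first: for every finitely presented $R$-module $N$, every injective $R$-module $I$, and every $i\geq 0$,
$$\Tor_i^R(N,\Hom_R(C,I))\cong\Hom_R(\Ext_R^i(N,C),I).$$
Since $R$ is coherent, $N$ admits a projective resolution $P_\bullet\to N$ in which every $P_j$ is finitely generated. For a finitely generated projective module $Q$ and an injective module $I$, the tensor-evaluation morphism $Q\otimes_R\Hom_R(C,I)\to\Hom_R(\Hom_R(Q,C),I)$ is an isomorphism (it holds for $Q=R$, hence for finite free modules, hence for direct summands of such). Applying the exact functor $\Hom_R(-,I)$ to the cochain complex $\Hom_R(P_\bullet,C)$, whose cohomology is $\Ext_R^\bullet(N,C)$, and comparing term by term with the chain complex $P_\bullet\otimes_R\Hom_R(C,I)$, whose homology is $\Tor_\bullet^R(N,\Hom_R(C,I))$, yields the displayed isomorphism.

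Granting this, I would argue (2) $\Leftrightarrow$ $FP\text{-}\id_R(C)\leq n$ as follows. Over the coherent ring $R$ one has $\fd_R(M)\leq n$ iff $\Tor_{n+1}^R(N,M)=0$ for all finitely presented $N$; and an $R$-module $X$ is zero as soon as $\Hom_R(X,I)=0$ for every injective $I$ (take $I$ the injective envelope of $X$). Hence $\fd_R(\Hom_R(C,I))\leq n$ for all injective $I$ iff $\Tor_{n+1}^R(N,\Hom_R(C,I))=0$ for all finitely presented $N$ and all injective $I$, iff $\Hom_R(\Ext_R^{n+1}(N,C),I)=0$ for all such $N,I$, iff $\Ext_R^{n+1}(N,C)=0$ for all finitely presented $N$; since over a coherent ring syzygies of finitely presented modules are again finitely presented, dimension shifting shows this last condition is exactly $FP\text{-}\id_R(C)\leq n$. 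For (1) $\Leftrightarrow$ $FP\text{-}\id_R(C)\leq n$: the case $P=R$ gives one direction, and conversely any projective $P$ is a direct summand of a free module $\bigoplus_J R$, so $C\otimes_R P$ is a direct summand of $C\otimes_R\bigoplus_J R\cong\bigoplus_J C$; because $N$ is finitely presented and $R$ is coherent, $\Ext_R^{n+1}(N,-)$ commutes with arbitrary direct sums and with passage to direct summands, so $\Ext_R^{n+1}(N,C)=0$ forces $\Ext_R^{n+1}(N,C\otimes_R P)=0$, i.e. $FP\text{-}\id_R(C\otimes_R P)\leq n$. Combining the two equivalences gives (1) $\Leftrightarrow$ (2).

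The main obstacle I expect is the first step: correctly setting up and deriving the tensor-evaluation isomorphism — verifying it at the level of finitely generated projectives, checking naturality so that the comparison is a chain map, and keeping the homological degrees straight — together with invoking coherence in exactly the right places (to obtain finitely generated projective resolutions of finitely presented modules, to reduce flat dimension to tests against finitely presented modules, and to know $\Ext_R^i(N,-)$ commutes with direct sums). Once that identity is in place, the remaining deductions are routine.
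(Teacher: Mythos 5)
Your proof is correct, but it follows a genuinely different route from the paper's. The paper works with character modules: writing $(-)^+=\Hom_R(-,E)$ for an injective cogenerator $E$, it uses the Cartan--Eilenberg duality $\Tor_i^R(M^+,N)\cong(\Ext_R^i(N,M))^+$ for finitely presented $N$, the adjunction $(C\otimes_R P)^+\cong\Hom_R(C,P^+)$, the fact that every injective $I$ is a direct summand of a product $\prod P^+$, and coherence to control the flat dimension of that product; the converse direction runs the same duality backwards starting from a flat module $F$ and $F^+$. You instead reduce both (1) and (2) to the single condition $FP\text{-}\id_R(C)\leq n$, and the engine is the evaluation isomorphism $\Tor_i^R(N,\Hom_R(C,I))\cong\Hom_R(\Ext_R^i(N,C),I)$, proved from the tensor-evaluation map over a resolution of $N$ by finitely generated projectives (this is where you use coherence), plus the elementary facts that flat dimension is detected by Tor against finitely presented modules and that a module vanishing under $\Hom_R(-,I)$ for all injectives $I$ is zero; for (1) you use that $\Ext_R^{n+1}(N,-)$ commutes with direct sums when $N$ is finitely presented over a coherent ring, so the case $P=R$ already controls all projectives. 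Both arguments are sound. Your version buys a cleaner statement (everything is measured on the one module $C$), avoids the injective-cogenerator and product-of-flats machinery (in particular the coherence fact about flat dimension of products), and makes the role of coherence very explicit (finitely generated projective resolutions, direct-sum-commuting Ext, dimension shifting for $FP$-injective dimension); the paper's version avoids having to establish the evaluation isomorphism by outsourcing the duality to Cartan--Eilenberg and stays entirely within the $(-)^+$ formalism. The one step you should write out carefully is the naturality of the tensor-evaluation map so that the comparison of complexes is a genuine isomorphism of complexes, exactly as you anticipate.
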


\begin{proof}
We use $(-)^+$ to denote the functor $\Hom_R(-, E)$,
where $E$ is an injective cogenerator for the category of $R$-modules.

$(1)\Rightarrow (2)$ Let $P$ be a projective $R$-module.
The hypothesis implies that $\Ext^{i>n}_R(N, C\otimes_R P) = 0$
for all finitely presented modules $N$.
Because $\Tor_{i>n}^R((C\otimes_R P)^+, N)$ $\cong$
$(\Ext^{i>n}_R(N$, $C\otimes_R$ $P))^+$ = 0 by [2, Proposition 5.3],
$\fd(C\otimes_R P)^+\leq n$, and so $\fd(\Hom_R(C, P^+))\leq n$.

For any injective module $I$,
since $P^+$ is also an injective cogenerator,
$I$ is a direct summands of $\prod P^+$.
Thus $\Hom_R (C, I)$ is a direct summands of
$\Hom_R (C, \prod P^+)$ $\cong$ $\prod\Hom_R (C, P^+)$,
and so $\fd(\Hom_R (C, I))\leq$ $\fd(\prod\Hom_R (C, P^+))\leq n$
from the coherence of $R$.

$(2)\Rightarrow (1)$ Suppose $F$ is a flat $R$-module, then
$F^+$ is injective, and so $\fd(\Hom_R (C, F^+))\leq n$.
The adjoint isomorphism $\Hom_R(C, F^+)\cong (C\otimes_R F)^+$
yields $\fd(C\otimes_R F)^+\leq n$.
Thus, for any $R$-module $N$,
$(\Ext^{i>n}_R(N, C\otimes_R F))^+$ $\cong$
$\Tor_{i>n}^R((C\otimes_R F)^+, N) = 0$, and hence
$\Ext^{i>n}_R(N, C\otimes_R F)=0$. Therefore,
$\id(C\otimes_R F)\leq n$, and so 
$FP$-$\id(C\otimes_R P)$ $\leq$ $\id(C\otimes_R P)\leq n$
for any projective module $P$.
\end{proof}

The following result gives a partial answer to Question B.

\begin{theorem} Let $R$ be a ring with $\mathrm{G}_C$-$w\gldim(R)<\infty$.
If every finitely generated submodule of a $G_C$-projective $R$-module is $G_C$-projective,
then $R$ is $G_C$-semihereditary.
\end{theorem}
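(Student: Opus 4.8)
The plan is to establish the two conditions of Lemma 3.7, namely that $R$ is coherent and that $\mathrm{G}_C$-$w\gldim(R)\le 1$. Coherence is immediate: since every projective $R$-module is $G_C$-projective, the hypothesis in particular says that every finitely generated submodule of a $G_C$-projective module is $G_C$-projective, so Theorem 3.2 applies. The substance of the argument is the dimension bound.

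For that bound it suffices, by definition of $\mathrm{G}_C$-$w\gldim$, to show $G_C$-$\fd_R(M)\le 1$ for every $R$-module $M$; choosing a short exact sequence $0\to K\to P\to M\to 0$ with $P$ projective (hence $G_C$-flat, as $R$ is coherent), this reduces to proving that every submodule $K$ of a projective $R$-module $P$ is $G_C$-flat. I would handle this by a direct-limit reduction to the finitely generated case, which is exactly where the hypothesis on finitely generated submodules is used. Write $K=\varinjlim_{\lambda}K_\lambda$ as the direct limit of its finitely generated submodules. Each $K_\lambda$ is a finitely generated submodule of the $G_C$-projective module $P$, hence $G_C$-projective by assumption, and therefore $G_C$-flat — this last implication is the relative analog of the classical fact that Gorenstein-projective modules are Gorenstein-flat, valid here because $R$ is coherent. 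Consequently $\Tor^R_{i}(K_\lambda,\Hom_R(C,I))=0$ for all $i\ge 1$ and every injective module $I$: indeed the ``left half'' $\cdots\to F_1\to F_0\to K_\lambda\to 0$ of the defining sequence $\mathbb{Y}$ of the $G_C$-flat module $K_\lambda$ is a flat resolution of $K_\lambda$ that stays exact under $\Hom_R(C,I)\otimes_R-$.

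To finish, I would invoke the finiteness hypothesis. Since $\mathrm{G}_C$-$w\gldim(R)<\infty$ we have $G_C$-$\fd_R(K)<\infty$, so — $R$ being coherent — Lemma 3.6 applies with $n=0$: $K$ is $G_C$-flat if and only if $\Tor^R_{i}(K,\Hom_R(C,I))=0$ for all $i\ge 1$ and all injective $I$. Because $\Tor$ commutes with direct limits,
$$\Tor^R_{i}(K,\Hom_R(C,I))\;\cong\;\varinjlim_{\lambda}\Tor^R_{i}(K_\lambda,\Hom_R(C,I))\;=\;0\qquad(i\ge 1).$$
Hence $K$ is $G_C$-flat, so $G_C$-$\fd_R(M)\le 1$; as $M$ was arbitrary, $\mathrm{G}_C$-$w\gldim(R)\le 1$, and Lemma 3.7 gives that $R$ is $G_C$-semihereditary.

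The one step I expect to need care is the implication ``$G_C$-projective $\Rightarrow$ $G_C$-flat'' for the finitely generated modules $K_\lambda$, since it does not follow formally from the definitions (the defining Hom-exactness condition for $G_C$-projectivity is not obviously the Tor-exactness condition for $G_C$-flatness); everything else is bookkeeping with Theorem 3.2, Lemma 3.6, and Lemma 3.7 plus the routine fact that $\Tor$ commutes with direct limits. It is also worth noting where the hypotheses are spent: the statement about finitely generated submodules is used precisely to make each $K_\lambda$ $G_C$-projective, while the finiteness of $\mathrm{G}_C$-$w\gldim(R)$ is used precisely to make $G_C$-$\fd_R(K)$ finite, which is the gateway to the Tor criterion of Lemma 3.6.
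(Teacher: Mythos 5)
Your overall architecture is sound and genuinely different from the paper's: you reduce the bound $\mathrm{G}_C$-$w\gldim(R)\leq 1$ to showing that a submodule $K$ of a projective module is $G_C$-flat, write $K=\varinjlim K_\lambda$ over its finitely generated submodules, and combine the hypothesis with the Tor criterion of Lemma 3.6 and the fact that $\Tor$ commutes with direct limits. The paper instead works only with finitely presented modules $M$, gets $G_C$-$\pd_R(M)\leq 1$, translates this via [9, Proposition 2.12] into $FP$-$\id(C\otimes_R Q)\leq 1$, and then uses Lemma 3.9 to conclude $\fd(\Hom_R(C,I))\leq 1$, which gives the required Tor-vanishing for \emph{all} modules at once. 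All of your bookkeeping steps (coherence via Theorem 3.2, $G_C$-$\fd_R(K)<\infty$ from the global hypothesis, Lemma 3.6 with $n=0$, Lemma 3.7 at the end) are correctly deployed.

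The gap is the step you yourself flag: the implication ``finitely generated $G_C$-projective $\Rightarrow$ $G_C$-flat'' (equivalently, $\Tor^R_{i\geq 1}(K_\lambda,\Hom_R(C,I))=0$) is asserted by analogy with the classical Gorenstein case but never proved, and it is exactly the non-formal content of the theorem: the definition of $G_C$-projectivity is a Hom-exactness condition against $C\otimes_R P$, while what you need is a Tor-exactness condition against $\Hom_R(C,I)$, and your stated justification (the left half of the complex $\mathbb{Y}$) presupposes that $K_\lambda$ is already $G_C$-flat. The paper's Lemma 3.9 exists precisely to supply this dualization, and your argument needs an equivalent input. It can be filled for your $K_\lambda$: since $R$ is coherent and $K_\lambda$ is a finitely generated submodule of a free module, $K_\lambda$ is finitely presented with all syzygies finitely presented; then $\Tor^R_i(K_\lambda,\Hom_R(C,I))^+\cong\Ext^i_R(K_\lambda,\Hom_R(C,I)^+)\cong\Ext^i_R(K_\lambda,C\otimes_R I^+)$ (using that $C$ is finitely presented), where $I^+$ is flat, hence a direct limit of finitely generated projectives by Lazard; since $\Ext^i_R(K_\lambda,-)$ commutes with such direct limits and $\Ext^{i\geq1}_R(K_\lambda,C\otimes_R P)=0$ for projective $P$ by [9, Proposition 2.12], the Tor groups vanish. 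With that supplement (or an explicit citation of a relative analog of ``Gorenstein projective implies Gorenstein flat over coherent rings,'' e.g.\ from [11]), your proof is complete; as written, the crucial step rests on an unproved appeal to analogy.
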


\begin{proof}
Let $M$ be a finitely presented $R$-module, there is an exact sequence
$$0\rightarrow K\rightarrow P\rightarrow M\rightarrow 0$$
with $P$ finitely generated projective.
Note that $R$ is coherent by Theorem 3.2,
it follows that $K$ is finitely generated.
By the hypothesis, $K$ is $G_C$-projective.
Thus, one has $G_C$-$\pd_R(M)\leq 1$ for
every finitely presented $R$-module $M$,
and so $\Ext^{i>1}_R(M, C\otimes_R Q)=0$ 
for any projective module $Q$ by [9, Proposition 2.12].
This means that $FP$-$\id(C\otimes_R Q)\leq 1$.
By Lemma 3.9, $\fd(\Hom_R (C, I))\leq 1$ for any injective module $I$,
which implies that $\Tor_{i>1}^R(N, \Hom_R(C, I))$ = 0 for any $R$-module $N$.
Since $G_C$-$\fd(N)<\infty$, Lemma 3.6 yields $G_C$-$\fd(N)\leq 1$.
Therefore, $\mathrm{G}_C$-$w\gldim(R)\leq 1$, and hence
$R$ is $G_C$-semihereditary by Lemma 3.7.

\end{proof}

\vspace{0.5cm}

{\bf ACKNOWLEDGEMENTS.} This research was partially supported by the National
Natural Science Foundation of China (Grant Nos. 11201220, 11401147).

\vspace{0.5cm}

\vspace{0.5cm}

GUOQIANG ZHAO, Department of Mathematics,

Hangzhou Dianzi University,
Hangzhou, 310018, PR China

e-mail: gqzhao@hdu.edu.cn

\vspace{0.2cm}

JUXIANG SUN, School of Mathematics and
Information Science,

Shangqiu Normal University, Shangqiu, 476000, PR China

e-mail: sunjx8078@163.com
\end{document}